\newtheorem{thm}{Theorem}[section]
\newtheorem{cor}[thm]{Corollary}
\theoremstyle{definition}
\theoremstyle{remark}
\newtheorem{rem}[thm]{Remark}
\numberwithin{equation}{section}
\begin{document}

\title[ harmonic number identities]{Combinatorial identities involving harmonic numbers}%
\author{necdet batir}%
\address{department of mathematics\\
faculty of sciences and arts\\
nev{\c{s}}ehir hac{\i} bekta{\c{s}} veli university, nev{\c{s}}ehir, turkey}
\email{nbatir@hotmail.com}%
\subjclass{Primary 05A10, 05A19}%
\keywords{Combinatorial identities, harmonic numbers, harmonic sums, binomial coefficients}%

\date{October 30, 2017}%
\begin{abstract}
In this work we prove a new combinatorial identity and applying it we establish many finite  harmonic sum identities. Among many others, we prove that
\begin{equation*}
  \sum_{k=1}^{n}\frac{(-1)^{k-1}}{k}\binom{n}{k}H_{n-k}=H_n^2+\sum_{k=1}^{n}\frac{(-1)^{k}}{k^2\binom{n}{k}},
\end{equation*}
and
\begin{equation*}
\sum_{k=1}^{n}\frac{(-1)^{k-1}}{k^2}\binom{n}{k}H_{n-k}=\frac{H_n[H_n^2+H_n^{(2)}]}{2}-\sum_{k=0}^{n-1}\frac{(-1)^k[H_n-H_k]}{(k+1)(n-k)\binom{n}{k}}.
\end{equation*}
Almost all of our results are new, while a few of them recapture know results.
\end{abstract}
\maketitle
\section{Introduction}
Let $s\in\mathbb{C}$. Then, the generalized harmonic numbers $H_n^{(s)}$ of order $s$ are defined by
\begin{equation}\label{e:1} 
H_n^{(s)}=\sum\limits_{k=1}^{n}\frac{1}{k^s}, \quad H_0^{(s)}=0 \quad \mbox{and} \quad H_n^{(1)}=H_n,
\end{equation}
see \cite{1,13}. These numbers have various applications in number theory, combinatorics, analysis, computer science and differential equations.  Recently, they have found applications in evaluating Feynman diagrams contributions of perturbed quantum field theory, see \cite{24,25}. Here and in the following we let  $\mathbb{N}_0=\mathbb{N}\cup\{0\}$, \, $\mathbb{Z}^-=\{-1,-2,-3,\cdots\}$ \, and  \,$\mathbb{Z}_0^-=\mathbb{Z}^-\cup\{0\}$.   The polygamma functions $\psi^{(n)}(s)$  ($s\in\mathbb{C}\backslash \mathbb{Z}^-)$ are defined by
\begin{equation}\label{e:2} 
\psi^{(n)}(s)=\frac{d^{n+1}}{ds^{n+1}}\log\Gamma(s)=\frac{d^{n}}{ds^{n}}\psi(s), n=0,1,2,\cdots,
\end{equation}
where $\Gamma(s)$ is the classical Euler's gamma function, and $\psi^{(0)}(s)=\psi(s)$ is the digamma function. Let us recall some basic properties of these functions which will be used frequently in this work.  A well-known relationship between the polygamma $\psi^{(n)}(s)$ and the generalized harmonic numbers $H_n^{(s)}$ is given by
\begin{equation}\label{e:3} 
\psi^{(m-1)}(n+1)-\psi^{(m-1)}(1)=(-1)^{m-1}(m-1)!H^{(m)}_n ,
\end{equation}
for $m\in\mathbb{N}$ and $n\in\mathbb{N}_0$; see \cite[pg. 22]{6}. The digamma function $\psi$ and harmonic numbers $H_n$ are related with
\begin{equation}\label{e:4} 
\psi(n+1)=-\gamma+H_n \quad(n\in\mathbb{N}),
\end{equation}
see \cite[pg. 31]{23}, where  $\gamma=0.57721\cdots$ is Euler-Mascheroni constant. The digamma function $\psi$ possesses the following properties:
\begin{equation}\label{e:5} 
\psi\left(s+\frac{1}{2}\right)=2\psi(2s)-\psi(s)-2\log2\quad (s\in\mathbb{C}\backslash\mathbb{Z}^-),
\end{equation}
and
\begin{equation}\label{e:6} 
\psi(s)-\psi(1-s)=-\pi\cot(\pi s),
\end{equation}
see \cite[pg. 25]{23}. The gamma function satisfies the reflection formula
\begin{equation}\label{e:7} 
\Gamma(s)\Gamma(1-s)=\frac{\pi}{\sin(\pi s)} \quad (s\in\mathbb{C}\backslash\mathbb{Z}^-)
\end{equation}
and the duplication formula
\begin{equation}\label{e:8} 
\Gamma\left(s+\frac{1}{2}\right)=\frac{\Gamma(2s)\Gamma(1/2)}{2^{2s-1}\Gamma(s)},
\end{equation}
see \cite[pgs. 346, 349]{16}. The binomial coefficients $\binom{s}{t}$ $(s,t\in\mathbb{C}\backslash\mathbb{Z}^-)$ are defined by
\begin{equation}\label{e:9} 
\binom{s}{t}=\frac{\Gamma(s+1)}{\Gamma(t+1)\Gamma(s-t+1)},
\end{equation}%
and they satisfy for for $n,k\in\mathbb{N}$ with $k\leq n$
\begin{equation}\label{e:10} 
\binom{n+1}{k}=\binom{n}{k}+\binom{n}{k-1} \quad \textrm{and}\quad\frac{n}{k}\binom{n-1}{k-1}=\binom{n}{k}.
\end{equation}
The beta function $B(s,t)$  is defined by
$$
B(s,t)=\int\limits_{0}^{1}u^{s-1}(1-u)^{t-1}du\quad (\Re(s)>0\,,\Re(t)>0).
$$
The gamma and beta functions are  related with
\begin{equation}\label{e:11} 
B(s,t)=\frac{\Gamma(s)\Gamma(t)}{\Gamma(s+t)},
\end{equation}
see \cite[p.251]{9}. In this paper we shall frequently use the following form of the binomial coefficients
$$
f_n(s):=\binom{s+n}{k}=\frac{\Gamma(s+n+1)}{k!\Gamma(s+n-k+1)}.
$$
Taking the logarithm of both sides of this equation, we have
$$
\log(f_n(s))=\log\Gamma(s+n+1)-\log\Gamma(s+n-k+1)-\log k!.
$$
Differentiation with respect to $s$ both sides  gives
\begin{equation}\label{e:12} 
f_n'(s)=\binom{s+n}{k}\{\psi(s+n+1)-\psi(s+n-k+1)\}.
\end{equation}
Let us also define
\begin{equation*}
g_n(s)=\binom{s+n}{n}.
\end{equation*}
Then differentiating yields
\begin{equation}\label{e:13} 
g_n'(s)=\binom{s+n}{n}[\psi(s+n+1)-\psi(s+1)],
\end{equation}
In the literature there exist many interesting identites for finite sums involving  the harmonic numbers and the binomial coefficients. For example, we have
\begin{equation*}
\sum\limits_{k=0}^{n}\binom{n}{k}^2H_k=\binom{2n}{n}[2H_n-H_{2n}] \quad (\textrm{see \cite{3}}),
\end{equation*}
\begin{equation*}
\sum_{k=0}^{n}(-1)^k\binom{n}{k}\{H_k-2H_{2k}\}=\frac{4^n}{n}\binom{2n}{n}^2 \quad (\textrm{see \cite{22}}),
\end{equation*}
and
\begin{equation*}
\sum\limits_{k=0}^{n}(-1)^k\binom{n}{k}H_{n+k}^2= \frac{1}{n\binom{2n}{n}}\left\{H_n-H_{2n}-\frac{2}{n}\right\},
\end{equation*}
see \cite[Corollary 23]{22}.

In the decades, combinatorial identities involving harmonic numbers have attracted the interest of many mathematicians and it has been discovered many interesting identities in different forms by these researchers by using different methods. In  \cite{4} and \cite{24} the authors used some identities of classical hypergeometric functions. In \cite{17} the authors computed the family of the following sums
\begin{equation*}
  \sum_{k=0}^n\binom{n}{k}^m\{1+m(n-2k)H_k\}\quad m=1,2,3,4,5
\end{equation*}
by using differential operator and Zeilberger's algorithm for definite hypergeometric sums.  Please see [2,3,4,5,12,14,15,17,18,21,22,24,25] and the references therein for more identities on this issue. The aim of this paper is to present further interesting combinatorial identities involving harmonic numbers. First, we establish a new combinatorial identity involving two parameters, and differentiating and integrating each side of this identity with respect to these parameters we obtain many harmonic number identities, some of which are new, and the others recover known identities.  Although, many other combinatorial identities can be derived by using these identities, for briefness we have selected here just some of them, and we intend to prepare a separate paper containing many other applications.

Now we are ready to present our main results.

\section{Main results}
\begin{thm}
Let $n\in\mathbb{N}$, $s\in\mathbb{C}\backslash\mathbb{Z}^-$ and $x\in\mathbb{C}$. Then there holds the following identity:
\begin{equation}\label{e:14}
\sum\limits_{k=0}^{n}\binom{s+n}{k}x^k=(1+x)^n\left[1+s\sum_{k=0}^{n-1}\frac{\binom{s+k}{k}}{k+1}\left(\frac{x}{x+1}\right)^{k+1}\right].
\end{equation}
\end{thm}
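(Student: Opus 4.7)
The plan is to prove \eqref{e:14} by induction on $n\ge 0$. For the base case $n=0$ both sides equal $1$: the left-hand sum contains only $\binom{s}{0}=1$, while on the right the inner sum is empty (its upper index is $-1$) so the bracket reduces to $1$ and $(1+x)^0=1$. For the induction step I would show that, upon abbreviating
\begin{equation*}
L_n(s,x) := \sum_{k=0}^{n}\binom{s+n}{k}x^k,\qquad
R_n(s,x) := (1+x)^n\left[1+s\sum_{k=0}^{n-1}\frac{\binom{s+k}{k}}{k+1}\left(\frac{x}{x+1}\right)^{k+1}\right],
\end{equation*}
both sides satisfy the same first-order recurrence in $n$.

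For $L_n$, I would apply Pascal's rule $\binom{s+n+1}{k}=\binom{s+n}{k}+\binom{s+n}{k-1}$, which extends the integer identity in \eqref{e:10} to complex top index because both sides are polynomials in $s$ of degree $k$ agreeing at infinitely many integers. Splitting the resulting sum and re-indexing, together with the extra $k=n+1$ term supplied by the first piece, gives
\begin{equation*}
L_{n+1}(s,x) \;=\; (1+x)\,L_n(s,x) \;+\; \binom{s+n}{n+1}\,x^{n+1}.
\end{equation*}
For $R_n$, isolating the top $k=n$ term of the inner sum and simplifying $(1+x)^{n+1}\bigl(x/(x+1)\bigr)^{n+1}=x^{n+1}$ yields
\begin{equation*}
R_{n+1}(s,x) \;=\; (1+x)\,R_n(s,x) \;+\; \frac{s\,\binom{s+n}{n}}{n+1}\,x^{n+1}.
\end{equation*}
Matching the two recurrences therefore reduces to the single coefficient identity
\begin{equation*}
\binom{s+n}{n+1} \;=\; \frac{s}{n+1}\,\binom{s+n}{n},
\end{equation*}
which is immediate from \eqref{e:9} together with $\Gamma(s+1)=s\Gamma(s)$, since both sides equal $\Gamma(s+n+1)/\bigl(\Gamma(n+2)\Gamma(s)\bigr)$.

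I do not expect a genuine obstacle here: once the base case and the two matching recurrences are in place, the induction closes and \eqref{e:14} follows. The only subtlety worth flagging is the extension of Pascal's rule from integer to complex top index, which is a standard polynomial-continuation argument. Boundary values (such as $s=0$ or $x=-1$) cause no trouble, since both sides of \eqref{e:14} are polynomials in $x$ of degree $n$ whose coefficients are meromorphic in $s$ with poles only at $s\in\mathbb{Z}^-$, so the identity propagates by continuity from any region where the manipulations are valid.
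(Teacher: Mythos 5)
Your proposal is correct and follows essentially the same route as the paper: induction on $n$ via the Pascal-rule recurrence $L_{n+1}=(1+x)L_n+\binom{s+n}{n+1}x^{n+1}$ (the paper's equation \eqref{e:15}), closed by the same coefficient identity $\binom{s+n}{n+1}=\frac{s}{n+1}\binom{s+n}{n}$. The only cosmetic differences are that you start the induction at $n=0$ rather than $n=1$ and you phrase the step as matching two recurrences instead of substituting the induction hypothesis directly.
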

\begin{proof} We prove by mathematical induction. Clearly, (\ref{e:14}) is valid for $n=1$. We assume that it is valid for $n$ and we shall show that it is also valid for $n+1$. We have
\begin{equation*}
\sum\limits_{k=0}^{n+1}\binom{s+n+1}{k}x^k=\sum\limits_{k=0}^{n}\binom{s+n+1}{k}x^k+\binom{s+n+1}{n+1}x^{n+1}.
\end{equation*}
Using the first relation in (\ref{e:10}), this becomes
\begin{equation*}
\sum\limits_{k=0}^{n+1}\binom{s+n+1}{k}x^k=\sum\limits_{k=0}^{n}\binom{s+n}{k}x^k+\sum\limits_{k=1}^{n}\binom{s+n}{k-1}x^k+\binom{s+n+1}{n+1}x^{n+1}.
\end{equation*}
Setting $k-1=k'$ in the second sum on the right hand side and then dropping the prime on $k'$, we get after a simple computation:
\begin{align*}
\sum\limits_{k=0}^{n+1}\binom{s+n+1}{k}x^k&=\sum_{k=0}^{n}\binom{s+n}{k}x^k+x\sum_{k=0}^{n}\binom{s+n}{k}x^k\\
&+\left[\binom{s+n+1}{n+1}-\binom{s+n}{n}\right]x^{n+1}.
\end{align*}
By (\ref{e:10}) we have $\binom{s+n+1}{n+1}-\binom{s+n}{n}=\binom{s+n}{1+n}$, thus, we get
\begin{equation}\label{e:15}
\sum\limits_{k=0}^{n+1}\binom{s+n+1}{k}x^k=(1+x)\sum\limits_{k=0}^{n}\binom{s+n}{k}x^k+\binom{s+n}{1+n}x^{1+n}.
\end{equation}
Therefore, by induction hypothesis we deduce
\begin{align*}
&\sum\limits_{k=0}^{n+1}\binom{s+n+1}{k}x^k=(1+x)^{n+1}+s(1+x)^n\sum\limits_{k=0}^{n-1}\binom{s+k}{k}\frac{x^{k+1}}{(k+1)(1+x)^k}\\
&+\binom{s+n}{n+1}x^{n+1}=(1+x)^{n+1}+s(1+x)^n\sum\limits_{k=0}^{n}\binom{s+k}{k}\frac{x^{k+1}}{(k+1)(1+x)^k}\\
&+\bigg[\binom{s+n}{1+n}-\frac{s}{n+1}\binom{s+n}{n}\bigg]x^{n+1}.
\end{align*}
Since $\binom{s+n}{n+1}-\frac{s}{n+1}\binom{s+n}{n}=0$, this proves that (\ref{e:14}) is also valid for $n+1$. This completes the proof.
\end{proof}
\begin{thm} For $n\in\mathbb{N}$, $s\in\mathbb{C}\backslash\mathbb{Z}^-$ and $x\in\mathbb{C}$ we have   
\begin{align}\label{e:16}
&\sum_{k=1}^{n}\binom{s+n}{k}\{\psi(s+n+1)-\psi(s+n-k+1)\}x^k\notag\\
&=(1+x)^n\sum_{k=0}^{n-1}\frac{\binom{s+k}{k}}{k+1}\{1+s[\psi(s+k+1)-\psi(s+1)]\}\left(\frac{x}{x+1}\right)^{k+1}.
\end{align}
\end{thm}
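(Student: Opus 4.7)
The plan is to obtain (\ref{e:16}) by differentiating both sides of the identity (\ref{e:14}) with respect to $s$, treating $x$ and $n$ as parameters. Since $(1+x)^n$ is independent of $s$, it will simply pass through the derivative on the right-hand side, and the formulas (\ref{e:12}) and (\ref{e:13}) already in the paper deliver exactly the polygamma coefficients that appear in (\ref{e:16}).

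For the left-hand side of (\ref{e:14}), I would apply (\ref{e:12}) termwise to get
\begin{equation*}
\frac{d}{ds}\sum_{k=0}^{n}\binom{s+n}{k}x^k = \sum_{k=0}^{n}\binom{s+n}{k}\{\psi(s+n+1)-\psi(s+n-k+1)\}x^k.
\end{equation*}
The $k=0$ term vanishes since $\psi(s+n+1)-\psi(s+n+1)=0$, so the sum may be started at $k=1$, matching the left-hand side of (\ref{e:16}) exactly.

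For the right-hand side, I would pull out $(1+x)^n$, note that the standalone $1$ inside the bracket differentiates to zero, and handle the sum using the product rule on $s\binom{s+k}{k}$. By (\ref{e:13}),
\begin{equation*}
\frac{d}{ds}\left[s\binom{s+k}{k}\right] = \binom{s+k}{k}+s\binom{s+k}{k}[\psi(s+k+1)-\psi(s+1)] = \binom{s+k}{k}\{1+s[\psi(s+k+1)-\psi(s+1)]\},
\end{equation*}
and the factors $(k+1)^{-1}$ and $\left(\tfrac{x}{x+1}\right)^{k+1}$ are $s$-free, so they are preserved. Assembling these pieces yields precisely the right-hand side of (\ref{e:16}).

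The only minor subtlety is justifying that termwise differentiation is legitimate: the left-hand side of (\ref{e:14}) is a finite sum, hence an entire function of $s$ (away from the poles of the gamma functions, i.e., on $\mathbb{C}\setminus\mathbb{Z}^-$), so differentiation under the summation sign is immediate, and the same applies to the finite right-hand sum. No convergence issues arise, and no induction is needed; the result follows directly as the $s$-derivative of Theorem 2.1. This is the main, and essentially only, step — the rest is bookkeeping to confirm that the $k=0$ index drops out on the left and that the product rule reproduces the bracketed factor on the right.
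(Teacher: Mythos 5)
Your proposal is correct and is exactly the paper's argument: the paper proves (\ref{e:16}) by differentiating (\ref{e:14}) with respect to $s$ and invoking (\ref{e:12}) and (\ref{e:13}), which is precisely what you do (the paper just omits the bookkeeping about the vanishing $k=0$ term and the product rule on $s\binom{s+k}{k}$ that you spell out).
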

\begin{proof}
The proof follows from differentiating with respect to $s$ both sides of (\ref{e:14}), and using  (\ref{e:12}) and (\ref{e:13}).
\end{proof}
\begin{cor} For $n\in\mathbb{N}$ and $s\in\mathbb{C}\backslash\mathbb{Z}^-$ we have 
\begin{align}\label{e:17} 
\sum_{k=0}^{n}&(-1)^k\binom{s+n}{k}[\psi(s+n+1)-\psi(s+n-k+1)]\notag\\
&=\frac{(-1)^n}{n}\binom{s+n-1}{n-1}[1+s(\psi(s+n)-\psi(s+1))].
\end{align}
\end{cor}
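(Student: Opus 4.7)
The plan is to obtain Corollary 2.3 as the specialization $x=-1$ of identity (\ref{e:16}) in Theorem 2.2. I would first observe that the $k=0$ contribution on the left-hand side of (\ref{e:16}) is identically zero, since $\psi(s+n+1)-\psi(s+n-k+1)$ vanishes when $k=0$. Therefore the sum on the left-hand side may be rewritten as running from $k=0$, and substituting $x=-1$ produces exactly the left-hand side of (\ref{e:17}).

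The main issue is the right-hand side of (\ref{e:16}), which at $x=-1$ appears to be a $0\cdot\infty$ indeterminate form: the prefactor $(1+x)^n$ vanishes while each $\bigl(\tfrac{x}{x+1}\bigr)^{k+1}$ blows up. The key algebraic step is to absorb the prefactor into each summand using
\[
(1+x)^n \left(\frac{x}{x+1}\right)^{k+1} = (1+x)^{n-k-1}\, x^{k+1},
\]
which is valid as a polynomial identity in $x$. The right-hand side of (\ref{e:16}) becomes
\[
\sum_{k=0}^{n-1} \frac{\binom{s+k}{k}}{k+1}\bigl\{1+s[\psi(s+k+1)-\psi(s+1)]\bigr\}(1+x)^{n-k-1}x^{k+1},
\]
which is a genuine polynomial in $x$ and can be evaluated at $x=-1$ without trouble.

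At $x=-1$ the factor $(1+x)^{n-k-1}$ vanishes for every $k<n-1$, so the entire sum collapses to the single surviving term at $k=n-1$. That term evaluates to
\[
\frac{\binom{s+n-1}{n-1}}{n}\bigl\{1+s[\psi(s+n)-\psi(s+1)]\bigr\}(-1)^n,
\]
which matches the right-hand side of (\ref{e:17}) exactly.

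The only real obstacle is the bookkeeping of the removable singularity at $x=-1$; once the factor $(1+x)^n$ is distributed, everything is a polynomial manipulation and the result follows. I do not anticipate any need for induction, hypergeometric identities, or further use of (\ref{e:3})–(\ref{e:13}) beyond what Theorem 2.2 already supplies.
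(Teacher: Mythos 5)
Your proposal is correct and follows the paper's own route exactly: the paper proves the corollary simply by setting $x=-1$ in (\ref{e:16}), and your rewriting of $(1+x)^n\left(\frac{x}{x+1}\right)^{k+1}$ as $(1+x)^{n-k-1}x^{k+1}$ merely makes explicit the removable-singularity bookkeeping that the paper leaves implicit. The collapse to the single $k=n-1$ term and the vanishing of the $k=0$ summand are both handled correctly.
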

\begin{proof}
The proof immediately follows from (\ref{e:16}) by taking $x=-1$.
\end{proof}
\begin{thm} For $n\in\mathbb{N}$, $s\in\mathbb{C}\backslash\mathbb{Z}^-$ and $x\in\mathbb{C}$ we have 
\begin{align}\label{e:18} 
&\sum_{k=0}^{n}\binom{s+n}{k}\bigg\{(\psi(s+n+1)-\psi(s+n-k+1))^2\notag\\
&+\psi'(s+n+1)-\psi'(s+n-k+1)\bigg\}x^k\notag\\
&=(1+x)^n\sum_{k=0}^{n-1}\frac{1}{k+1}\binom{s+k}{k}\bigg\{2(\psi(s+k+1)-\psi(s+1))\notag\\
&+s\bigg[(\psi(s+k+1)-\psi(s+1))^2+\psi'(s+k+1)-\psi'(s+1)\bigg]\bigg\}\left(\frac{x}{x+1}\right)^{k+1}.
\end{align}
\end{thm}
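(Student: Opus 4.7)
The shape of (\ref{e:18}) -- a quadratic in polygamma values plus a difference of derivatives of polygammas -- is exactly what one expects from differentiating (\ref{e:16}) once more with respect to $s$, equivalently from differentiating (\ref{e:14}) twice with respect to $s$. I would therefore proceed by applying $\partial_s$ termwise to (\ref{e:16}); the factors $(1+x)^n$, $x^k$, $1/(k+1)$, and $(x/(x+1))^{k+1}$ contain no $s$ and pass through unchanged, so the real work is localized in the two binomial/polygamma blocks on each side.

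On the left, set $F(s)=\binom{s+n}{k}$ and $P(s)=\psi(s+n+1)-\psi(s+n-k+1)$, so that (\ref{e:12}) reads $F'=FP$. Differentiating once more gives $F''=F(P^2+P')$ with $P'(s)=\psi'(s+n+1)-\psi'(s+n-k+1)$, which is precisely the bracketed factor multiplying $\binom{s+n}{k}x^k$ on the left-hand side of (\ref{e:18}); the $k=0$ summand vanishes identically because $P=P'=0$ there, so the summation index may be extended from $k=1$ back down to $k=0$ without penalty. On the right, the only $s$-dependent block inside the sum in (\ref{e:16}) is $\binom{s+k}{k}\{1+s[\psi(s+k+1)-\psi(s+1)]\}$. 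Writing $g(s)=\binom{s+k}{k}$ and $\psi_1(s)=\psi(s+k+1)-\psi(s+1)$, identity (\ref{e:13}) gives $g'=g\psi_1$; the product rule applied to $g(1+s\psi_1)$, combined with $\psi_1'(s)=\psi'(s+k+1)-\psi'(s+1)$, yields $g\{2\psi_1+s(\psi_1^2+\psi_1')\}$, which matches the bracket on the right-hand side of (\ref{e:18}) term for term.

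The main (and only) obstacle is clerical: verifying that $F''(s)$ and $(sg(s))'$ reorganize themselves into exactly the two explicit bracketed forms displayed in (\ref{e:18}), with all signs, pairings, and indices correct. Once the auxiliary quantities $P,P',\psi_1,\psi_1'$ are fixed as above, this reduces to a short product-rule verification, and (\ref{e:18}) follows.
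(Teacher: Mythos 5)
Your proposal is correct and is exactly the paper's argument: the author also proves (\ref{e:18}) by differentiating (\ref{e:16}) with respect to $s$ and invoking (\ref{e:12}) and (\ref{e:13}), only stated in one line without the details you supply. Your explicit bookkeeping ($F''=F(P^2+P')$ on the left, the product rule on $g(1+s\psi_1)$ giving $g\{2\psi_1+s(\psi_1^2+\psi_1')\}$ on the right, and the harmless extension of the left sum to $k=0$) is a faithful expansion of that same computation.
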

\begin{proof}
The proof follows from differentiating with respect to $s$ both sides of (\ref{e:16}), by the help of (\ref{e:12}) and (\ref{e:13}).
\end{proof}
\begin{cor}For $n\in\mathbb{N}$ and $s\in\mathbb{C}\backslash\mathbb{Z}^-$ we have  
\begin{align}\label{e:19} 
&\sum_{k=0}^{n}(-1)^k\binom{s+n}{k}\bigg\{(\psi(s+n+1)-\psi(s+n-k+1))^2\notag\\
&+\psi'(s+n+1)-\psi'(s+n-k+1)\bigg\}\notag\\
&=\frac{(-1)^n}{n}\binom{s+n-1}{n-1}\bigg\{2[\psi(s+n)-\psi(s+1)]\notag\\
&+s\left[\left(\psi(s+n)-\psi(s+1)\right)^2+\psi'(s+n)-\psi'(s+1)\right]\bigg\}.
\end{align}
\end{cor}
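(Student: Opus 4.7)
The plan is to mimic the derivation of Corollary 3.3 from Theorem 3.2 and simply substitute $x=-1$ into the identity (\ref{e:18}). On the left this immediately produces the alternating sum appearing on the left of (\ref{e:19}), so all of the work lies on the right-hand side.

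The main delicacy is that the prefactor $(1+x)^n$ vanishes at $x=-1$ while the factor $\left(\frac{x}{x+1}\right)^{k+1}$ inside the sum is singular there. I would handle this $0\cdot\infty$ indeterminate form by first absorbing the prefactor into each summand, writing
\[
(1+x)^n\left(\frac{x}{x+1}\right)^{k+1}=x^{k+1}(1+x)^{n-k-1},
\]
which is a genuine polynomial in $x$ for $0\le k\le n-1$ and hence well defined at $x=-1$. At $x=-1$, every term with $k\le n-2$ vanishes, while the $k=n-1$ term evaluates to $(-1)^n$.

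Consequently only the $k=n-1$ term of the sum on the right of (\ref{e:18}) survives the substitution. Its coefficient becomes $\frac{(-1)^n}{n}\binom{s+n-1}{n-1}$, and the bracketed factor inside the sum reduces at $k=n-1$ to
\[
2[\psi(s+n)-\psi(s+1)]+s\bigl[(\psi(s+n)-\psi(s+1))^2+\psi'(s+n)-\psi'(s+1)\bigr],
\]
which is precisely the right-hand side of (\ref{e:19}). The only substantive obstacle is the indeterminate form at $x=-1$; once it is unwound by the rewriting above, the corollary follows with no further computation, exactly parallel to how Corollary 3.3 is obtained from (\ref{e:16}).
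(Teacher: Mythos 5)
Your proposal is correct and follows exactly the paper's own route: the paper simply evaluates (\ref{e:18}) at $x=-1$, and your rewriting $(1+x)^n\left(\frac{x}{x+1}\right)^{k+1}=x^{k+1}(1+x)^{n-k-1}$ correctly resolves the apparent indeterminacy, isolating the $k=n-1$ term with value $(-1)^n$ and yielding the stated right-hand side. The only difference is that you spell out this limiting step, which the paper leaves implicit.
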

\begin{proof}
If we write Eq. (\ref{e:18}) at $x=-1$, the proof is completed.
\end{proof}
\begin{thm} For $n\in\mathbb{N}$ and $s\in\mathbb{C}\backslash\mathbb{Z}^-$ we have  
\begin{equation}\label{e:20} 
\sum_{k=1}^{n}\binom{s+n}{k}\frac{(-1)^{k-1}}{k}=H_n+ s\sum_{k=0}^{n-1}\frac{(-1)^k\binom{s+k}{k}}{(k+1)^2\binom{n}{k+1}}.
\end{equation}
\end{thm}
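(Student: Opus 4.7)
The shape of (\ref{e:20})---the factor $\frac{(-1)^{k-1}}{k}$ on the left and the extra factor $\frac{1}{(k+1)^2}$ together with a binomial $\binom{n}{k+1}^{-1}$ on the right---strongly suggests that the correct operation on (\ref{e:14}) is \emph{integration} with respect to $x$, rather than differentiation as in the previous corollaries. My plan is therefore to divide (\ref{e:14}) by $x$ (after removing the $k=0$ term) and then integrate from $0$ to $-1$; the identity $\int_{0}^{-1}x^{k-1}\,dx=\frac{(-1)^{k}}{k}$ produces exactly the factors we need on the left.

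Concretely, I would first rewrite (\ref{e:14}) as
\begin{equation*}
\sum_{k=1}^{n}\binom{s+n}{k}x^{k-1}=\frac{(1+x)^{n}-1}{x}+s(1+x)^{n}\sum_{k=0}^{n-1}\frac{\binom{s+k}{k}}{k+1}\cdot\frac{1}{x}\left(\frac{x}{x+1}\right)^{k+1},
\end{equation*}
and simplify the last fraction to $\frac{x^{k}}{(1+x)^{k+1}}$ so that the factor $(1+x)^{n}$ telescopes into $(1+x)^{n-k-1}$. Integrating both sides from $0$ to $-1$ (and multiplying by $-1$) gives
\begin{equation*}
\sum_{k=1}^{n}\binom{s+n}{k}\frac{(-1)^{k-1}}{k}=-\int_{0}^{-1}\frac{(1+x)^{n}-1}{x}\,dx-s\sum_{k=0}^{n-1}\frac{\binom{s+k}{k}}{k+1}\int_{0}^{-1}x^{k}(1+x)^{n-k-1}\,dx.
\end{equation*}

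Now I would evaluate the two integrals separately. The first one, via $u=1+x$, becomes $\int_{0}^{1}\frac{1-u^{n}}{1-u}\,du=H_{n}$, which accounts for the leading term on the right of (\ref{e:20}). For the second, the substitution $u=-x$ converts it into a beta integral:
\begin{equation*}
\int_{0}^{-1}x^{k}(1+x)^{n-k-1}\,dx=-(-1)^{k}B(k+1,n-k)=-(-1)^{k}\frac{k!(n-k-1)!}{n!},
\end{equation*}
and a direct manipulation of factorials shows $\frac{k!(n-k-1)!}{n!}=\frac{1}{(k+1)\binom{n}{k+1}}$. Substituting this back produces precisely $s\sum_{k=0}^{n-1}\frac{(-1)^{k}\binom{s+k}{k}}{(k+1)^{2}\binom{n}{k+1}}$.

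The only genuinely delicate point is the bookkeeping of signs through the successive substitutions and the identification of $B(k+1,n-k)$ with the reciprocal binomial coefficient in the stated form; everything else is routine once the integration trick is in place. An alternative would be a direct induction on $n$ as in Theorem~\ref{e:14}, but the integral route is cleaner because it reuses (\ref{e:14}) as a black box and parallels the differentiation arguments used for (\ref{e:16}) and (\ref{e:18}).
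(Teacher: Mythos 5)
Your proposal is correct and is essentially the paper's own proof: the paper likewise removes the $k=0$ term from (\ref{e:14}), divides by $x$, and integrates, evaluating the resulting harmonic-number integral and beta integrals exactly as you do. The only cosmetic difference is that the paper first replaces $x$ by $-x$ and integrates over $[0,1]$ (its equation (\ref{e:21})), whereas you integrate directly from $0$ to $-1$; the two computations are identical after the substitution $u=-x$.
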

\begin{proof} Taking the first term of the sum on the left side  of (\ref{e:14}) to the right, dividing each side by $x$, and finally replacing $x$ by $-x$ in the resulting equation, we obtain
\begin{equation}\label{e:21} 
\sum_{k=1}^{n}(-1)^k\binom{s+n}{k}x^{k-1}=-\frac{(1-x)^n-1}{(1-x)-1}+ s\sum_{k=0}^{n-1}\frac{(-1)^k\binom{s+k}{k}}{k+1}(1-x)^{n-k-1}x^k.
\end{equation}
Integrating each side of (\ref{e:21}) from $x=0$ to $x=1$, we find that
\begin{align}\label{e:22} 
&\sum_{k=1}^{n}\binom{s+n}{k}\frac{(-1)^{k}}{k}\notag\\
&=-\int_{0}^{1}\frac{(1-x)^n-1}{(1-x)-1}\thinspace dx+ s\sum_{k=0}^{n-1}\frac{(-1)^k\binom{s+k}{k}}{k+1}\int_0^1(1-x)^{n-k-1}x^k\thinspace dx,
\end{align}
which, in view of
\begin{equation*}
\int_{0}^{1}\frac{(1-x)^n-1}{(1-x)-1}\thinspace dx=H_n,
\end{equation*}
see \cite{24}, and
\begin{equation*}
\int_0^1(1-x)^{n-k-1}x^k\thinspace dx=\frac{(n-k-1)!k!}{n!},
\end{equation*}
we complete the proof.
\end{proof}
\begin{thm} For all $n\in\mathbb{N}$ and $s\in\mathbb{C}\backslash\mathbb{Z^-}$ we have 
\begin{align}\label{e:23}
&\sum_{k=1}^{n}\frac{(-1)^{k-1}}{k}\binom{s+n}{k}[\psi(s+n+1)-\psi(s+n-k+1)]\notag\\
&=\sum_{k=0}^{n-1}\frac{(-1)^k\binom{s+k}{k}}{(k+1)^2\binom{n}{k+1}}+s\sum_{k=0}^{n-1}\frac{(-1)^k\binom{s+k}{k}}{(k+1)^2\binom{n}{k+1}}[\psi(s+k+1)-\psi(s+1)].
\end{align}
\end{thm}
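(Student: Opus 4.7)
My plan is to obtain (\ref{e:23}) by differentiating the identity (\ref{e:20}) with respect to the parameter $s$. This strategy exactly parallels how (\ref{e:16}) was derived from (\ref{e:14}) and (\ref{e:18}) from (\ref{e:16}): each new identity in the paper enriches its predecessor by one additional $\psi$-factor via an application of $d/ds$.

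First I would handle the left-hand side of (\ref{e:20}), namely $\sum_{k=1}^{n}\frac{(-1)^{k-1}}{k}\binom{s+n}{k}$, which depends on $s$ only through the binomial coefficient. Formula (\ref{e:12}) supplies $\frac{d}{ds}\binom{s+n}{k} = \binom{s+n}{k}\{\psi(s+n+1)-\psi(s+n-k+1)\}$, so term-by-term differentiation immediately produces the left-hand side of (\ref{e:23}).

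Next, on the right-hand side of (\ref{e:20}), the $H_n$ summand contributes nothing because it is independent of $s$. For the remaining sum I would apply the product rule to $s\binom{s+k}{k}$ together with (\ref{e:13}), giving $\frac{d}{ds}\bigl(s\binom{s+k}{k}\bigr) = \binom{s+k}{k}+s\binom{s+k}{k}[\psi(s+k+1)-\psi(s+1)]$. Multiplying by the $s$-independent factor $\frac{(-1)^k}{(k+1)^2\binom{n}{k+1}}$ and summing over $k$ recovers the two sums on the right of (\ref{e:23}).

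As an alternative route, one could prove (\ref{e:23}) directly from (\ref{e:16}) by repeating the integration argument that yielded (\ref{e:20}) from (\ref{e:14}): the $k=0$ term in (\ref{e:16}) already vanishes, so one may divide by $x$, replace $x$ by $-x$, and integrate over $[0,1]$, evaluating the resulting beta integrals via $\int_0^1(1-x)^{n-k-1}x^k\,dx = \frac{k!(n-k-1)!}{n!} = \frac{1}{(k+1)\binom{n}{k+1}}$. I do not anticipate any real obstacle, since both routes are essentially mechanical; the only care required is the correct bookkeeping of the $\psi$-argument shifts in (\ref{e:12}) and the product rule on $s\binom{s+k}{k}$.
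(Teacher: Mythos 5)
Your proposal is correct and coincides with the paper's own proof: the paper obtains (\ref{e:23}) precisely by differentiating (\ref{e:20}) with respect to $s$ and invoking (\ref{e:12}) and (\ref{e:13}), exactly as you describe. The bookkeeping you outline (the $H_n$ term dropping out, the product rule on $s\binom{s+k}{k}$) is all that is needed.
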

\begin{proof}
The proof follows immediately from differentiating both sides of (\ref{e:20}) with respect to $s$, and using (\ref{e:12}) and (\ref{e:13}).
\end{proof}
\begin{thm} For all $n\in\mathbb{N}$ and $s\in\mathbb{C}\backslash\mathbb{Z^-}$ we have 
\begin{align}\label{e:24} 
&\sum_{k=1}^{n}\frac{(-1)^{k-1}}{k}\binom{s+n}{k}\bigg\{(\psi(s+n+1)-\psi(s+n-k+1))^2\notag\\
&+\psi'(s+n+1)-\psi'(s+n-k+1)\bigg\}\notag\\
&=2\sum_{k=0}^{n-1}\frac{(-1)^k\binom{s+k}{k}}{(k+1)^2\binom{n}{k+1}}[\psi(s+k+1)-\psi(s+1)]\notag\\
&+s\sum_{k=0}^{n-1}\frac{(-1)^k\binom{s+k}{k}}{(k+1)^2\binom{n}{k+1}}\bigg\{(\psi(s+k+1)-\psi(s+1))^2\notag\\
&+\psi'(s+k+1)-\psi'(s+1)\bigg\}.
\end{align}
\begin{proof}
If we differentiate both sides of (\ref{e:23}) with respect to $s$ the proof is completed.
\end{proof}
\end{thm}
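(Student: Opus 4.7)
The plan is to prove (\ref{e:24}) exactly as the author indicates, namely by differentiating both sides of (\ref{e:23}) with respect to $s$ and invoking (\ref{e:12}) and (\ref{e:13}). The identity (\ref{e:24}) is a ``second-generation'' consequence of the parent identity (\ref{e:14}): (\ref{e:17}) came from one $\partial_s$ applied to the $x=-1$ specialization, (\ref{e:19}) from two, and the present theorem will come from two $\partial_s$'s applied to the integrated form (\ref{e:20}).

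I would first treat the left-hand side of (\ref{e:23}). The $k$-th summand is $\frac{(-1)^{k-1}}{k}\binom{s+n}{k}[\psi(s+n+1)-\psi(s+n-k+1)]$, and differentiating this product with respect to $s$ yields two contributions. Applying (\ref{e:12}), $\partial_s\binom{s+n}{k}=\binom{s+n}{k}[\psi(s+n+1)-\psi(s+n-k+1)]$, which multiplies the existing $\psi$-difference to give $[\psi(s+n+1)-\psi(s+n-k+1)]^2$. The other contribution, from differentiating the $\psi$-difference itself, gives $\psi'(s+n+1)-\psi'(s+n-k+1)$. Adding them reproduces the braced expression on the LHS of (\ref{e:24}).

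On the right-hand side of (\ref{e:23}) I would treat the two sums separately. For the first sum, $\sum_{k=0}^{n-1}\frac{(-1)^k\binom{s+k}{k}}{(k+1)^2\binom{n}{k+1}}$, only the factor $\binom{s+k}{k}$ depends on $s$, and by (\ref{e:13}) its derivative introduces $\psi(s+k+1)-\psi(s+1)$. For the second sum, $s\sum_{k=0}^{n-1}\frac{(-1)^k\binom{s+k}{k}}{(k+1)^2\binom{n}{k+1}}[\psi(s+k+1)-\psi(s+1)]$, the product rule applied to the three $s$-dependent factors $s$, $\binom{s+k}{k}$, and $\psi(s+k+1)-\psi(s+1)$ yields three terms: differentiating $s$ gives an extra copy of the first sum's derivative (producing the total coefficient $2$ after combining); differentiating $\binom{s+k}{k}$ via (\ref{e:13}) squares the $\psi$-difference; and differentiating $\psi(s+k+1)-\psi(s+1)$ produces $\psi'(s+k+1)-\psi'(s+1)$. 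Grouping the last two under the factor $s$ gives the $s$-sum on the RHS of (\ref{e:24}).

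The computation is entirely mechanical; the only point that requires any care is the bookkeeping of the product rule in the second sum on the right, where one must not forget the term arising from $\partial_s s = 1$, since that is precisely what doubles the $[\psi(s+k+1)-\psi(s+1)]$-sum and explains the coefficient $2$ in (\ref{e:24}). Once these contributions are collected and matched with the corresponding pieces from the LHS, (\ref{e:24}) follows immediately.
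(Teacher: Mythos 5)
Your proof is correct and follows exactly the paper's own route: differentiating both sides of (\ref{e:23}) with respect to $s$ using (\ref{e:12}) and (\ref{e:13}), with the product rule on the factor $s$ accounting for the coefficient $2$. The bookkeeping you supply is accurate and simply fills in the mechanical details the paper leaves implicit.
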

\begin{thm} For all $n\in\mathbb{N}$ and $s\in\mathbb{C}\backslash\mathbb{Z^-}$ we have 
\begin{equation}\label{e:25} 
\sum\limits_{k=1}^{n}\binom{s+n}{k}\frac{(-1)^{k-1}}{k^2}=\frac{H_n^2+H_n^{(2)}}{2}+s\sum_{k=0}^{n-1}\frac{(-1)^k}{k+1}\binom{s+k}{k}\frac{H_n-H_k}{(n-k)\binom{n}{k}}.
\end{equation}
\end{thm}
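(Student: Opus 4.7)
The strategy is to extend the integration argument that produced (\ref{e:20}) by one more step. Since (\ref{e:20}) came from integrating (\ref{e:21}) once from $0$ to $1$, the target identity (\ref{e:25}), whose left-hand side carries $1/k^{2}$ in place of $1/k$, should emerge by integrating (\ref{e:21}) twice.

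Concretely, starting from the equivalent rewriting of (\ref{e:21}),
\[
\sum_{k=1}^n (-1)^{k-1}\binom{s+n}{k}x^{k-1}=\frac{1-(1-x)^n}{x}+s\sum_{k=0}^{n-1}\frac{(-1)^k\binom{s+k}{k}}{k+1}(1-x)^{n-k-1}x^k,
\]
I would first integrate in $x$ from $0$ to $x$, so the left-hand side becomes $\sum(-1)^{k-1}\binom{s+n}{k}x^{k}/k$; then divide through by $x$ and integrate once more from $0$ to $1$. The left-hand side collapses to the target sum $\sum(-1)^{k-1}\binom{s+n}{k}/k^{2}$.

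For the right-hand side, both double integrals will be handled by Fubini's theorem in the form
\[
\int_0^1 \frac{1}{x}\int_0^x f(t)\,dt\,dx = -\int_0^1 f(t)\log t\,dt.
\]
The first double integral becomes $-\int_0^1 [1-(1-t)^n]t^{-1}\log t\,dt$; expanding by the binomial theorem and using $\int_0^1 t^{j-1}\log t\,dt=-1/j^{2}$, this reduces to the classical sum $\sum_{j=1}^n (-1)^{j-1}\binom{n}{j}/j^{2}=(H_n^{2}+H_n^{(2)})/2$, exactly the first term of (\ref{e:25}). The second family of double integrals reduces to $-\int_0^1 t^{k}(1-t)^{n-k-1}\log t\,dt=-\partial_a B(a+1,n-k)\big|_{a=k}$, which via $B(a+1,n-k)=\Gamma(a+1)\Gamma(n-k)/\Gamma(a+n-k+1)$ and the identity (\ref{e:4}) evaluates to $\frac{k!(n-k-1)!}{n!}(H_n-H_k)=\frac{H_n-H_k}{(n-k)\binom{n}{k}}$, matching the coefficient appearing in (\ref{e:25}).

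The main obstacle is the evaluation of the first double integral, i.e.\ the identification $\sum_{j=1}^n (-1)^{j-1}\binom{n}{j}/j^{2}=(H_n^{2}+H_n^{(2)})/2$. This is a classical identity; should a self-contained derivation be desired, it can be established either by induction on $n$ via Pascal's rule and (\ref{e:20}) at $s=0$, or by running the same double-integration scheme specialized to $s=0$ (in which case the $s$-sum vanishes and the identity emerges from the remaining first double integral). Everything else in the plan is routine bookkeeping once Fubini and the Beta-function derivative have been applied.
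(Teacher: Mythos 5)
Your plan is correct and is essentially the paper's own proof: the paper likewise integrates (\ref{e:21}) twice (first from $0$ to $u$, then divides by $u$ and integrates over $[0,1]$), and evaluates the resulting double integrals by reducing them to derivatives of the Beta function. Your interchange of the order of integration, $\int_0^1\frac{1}{x}\int_0^x f(t)\,dt\,dx=-\int_0^1 f(t)\log t\,dt$, is the paper's integration by parts in disguise, and your treatment of the second family via $-\partial_a B(a+1,n-k)\big|_{a=k}=\frac{H_n-H_k}{(n-k)\binom{n}{k}}$ is exactly what the paper does. The one genuine divergence is the first double integral: the paper does not expand $[1-(1-t)^n]/t$ binomially but pushes the integration by parts one step further to reach $-\frac{n}{2}\int_0^1\log^2u\,(1-u)^{n-1}\,du=-\frac{n}{2}\frac{d^2}{dt^2}B(t+1,n)\big|_{t=0}=-\frac{1}{2}\bigl(H_n^2+H_n^{(2)}\bigr)$, so the closed form falls out of the Beta function directly and the classical sum $\sum_{j=1}^n(-1)^{j-1}\binom{n}{j}/j^2=(H_n^2+H_n^{(2)})/2$ is then harvested as a corollary (Identity 17, the $s=0$ case of (\ref{e:25})) rather than consumed as an input. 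This matters for your two fallback suggestions: rerunning the double-integration scheme at $s=0$ is circular, since at $s=0$ the scheme only asserts that the alternating sum equals the very integral you still need to evaluate; the induction via Pascal's rule and (\ref{e:43}) does work, but the cleanest fix is simply to evaluate $-\int_0^1[1-(1-t)^n]t^{-1}\log t\,dt$ the paper's way, through the second Beta derivative. (A minor remark: your rewriting of (\ref{e:21}) with $[1-(1-x)^n]/x$ and a plus sign on the $s$-sum is the correct form --- it checks at $n=1$ --- so there is no sign issue in your starting point.)
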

\begin{proof}
Integrating both sides of (\ref{e:21}) from $x=0$ to $x=u$, we get
\begin{align*}
\sum\limits_{k=1}^{n}(-1)^k&\binom{s+n}{k}\frac{u^k}{k}=\int_{0}^{u}\frac{(1-x)^n-1}{x}dx\\
&+s\sum_{k=0}^{n-1}\frac{(-1)^k\binom{s+k}{k}}{k+1}\int_{0}^{u}(1-x)^{n-k-1}x^kdx.
\end{align*}
Integrating both sides of this equation over $[0,1]$, after dividing by $u$ each side, we obtain
\begin{align}\label{e:26} 
\sum\limits_{k=1}^{n}(-1)^k&\binom{s+n}{k}\frac{1}{k^2}=\int_{0}^{1}\frac{1}{u}\int_{0}^{u}\frac{(1-x)^n-1}{x}dxdu\notag\\
&+s\sum_{k=0}^{n-1}\frac{(-1)^k\binom{s+k}{k}}{k+1}\int_{0}^{1}\frac{1}{u}\int_{0}^{u}(1-x)^{n-k-1}x^kdxdu.
\end{align}
For the first integral on the right hand side of (\ref{e:26}), integration by parts gives
\begin{align}\label{e:27} 
\int_{0}^{1}\frac{1}{u}\int_{0}^{u}\frac{(1-x)^n-1}{x}dxdu&=\log u\int_{0}^{u}\frac{(1-x)^n-1}{x}dx\bigg|_{u=0}^{u=1}\notag\\
&-\int_{0}^{1}\frac{\log u}{u}[(1-u)^n-1]du.
\end{align}
The first term on the right hand side of (\ref{e:27}) is equal to zero, so that, applying integration by parts to the last integral, we get
\begin{align}\label{e:28} 
\int_{0}^{1}\frac{1}{u}\int_{0}^{u}\frac{(1-x)^n-1}{x}dxdu&=-\frac{1}{2}\log^2u[(1-u)^n-1]\bigg|_{u=0}^{u=1}\notag\\
&-\frac{n}{2}\int_{0}^{1}\log^2 u(1-u)^{n-1}du.\notag\\
\end{align}
The first term on the right hand side of (\ref{e:28}) is equal to zero, hence, we get by (\ref{e:11}) and (\ref{e:3})
\begin{align}\label{e:29} 
\int_{0}^{1}\frac{1}{u}\int_{0}^{u}\frac{(1-x)^n-1}{x}dxdu&=-\frac{n}{2}\int_{0}^{1}\log^2u (1-u)^{n-1}du\notag\\
&=-\frac{n}{2}\frac{d^2}{dt^2}\int_{0}^{1}u^t(1-u)^{n-1}du\bigg|_{t=0}\notag\\
&=-\frac{n}{2}\frac{d^2}{dt^2}\frac{\Gamma(t+1)\Gamma(n)}{\Gamma(t+n+1)}\bigg|_{t=0}=-\frac{H_n^2+H_n^{(2)}}{2}.
\end{align}
For the second integral in (\ref{e:26}) we have by integration by parts
\begin{align*}
&\int_{0}^{1}\frac{1}{u}\int_{0}^{u}(1-x)^{n-k-1}x^kdxdu=\log u\int_{0}^{u}(1-x)^{n-k-1}x^kdx\bigg|_{u=0}^{u=1}\notag\\
&-\int_{0}^{1}\log u(1-u)^{n-k-1}u^kdu.
\end{align*}
The first term on the right hand side of this equation is zero. So, we get by (\ref{e:11}) and (\ref{e:3})
\begin{align}\label{e:30} 
&\int_{0}^{1}\frac{1}{u}\int_{0}^{u}(1-x)^{n-k-1}x^kdxdu=-\int_{0}^{1}\log u(1-u)^{n-k-1}u^kdu\notag\\
&=-\int_{0}^{1}\frac{d}{dt}u^t(1-u)^{n-k-1}du\bigg|_{t=k}=-\frac{d}{dt}\int_{0}^{1}u^t(1-u)^{n-k-1}du\bigg|_{t=k}\notag\\
&=-\frac{d}{dt}\frac{\Gamma(t+1)\Gamma(n-k)}{\Gamma(n+t-k+1)}\bigg|_{t=k}=\frac{H_n-H_k}{(n-k)\binom{n}{k}}.
\end{align}
Inserting the values of the integrals given in (\ref{e:29}) and (\ref{e:30}) in (\ref{e:26}) and using (\ref{e:3}), we complete the proof of Theorem 2.9.

\end{proof}
\begin{thm} For all $n\in\mathbb{N}$ and $s\in\mathbb{C}\backslash\mathbb{Z^-}$ we have 
\begin{align}\label{e:31} 
&\sum\limits_{k=1}^{n}\binom{s+n}{k}\frac{(-1)^{k-1}}{k^2}[\psi(s+n+1)-\psi(s+n-k+1)]\notag\\
&=\sum_{k=0}^{n-1}\frac{(-1)^k}{k+1}\binom{s+k}{k}\frac{H_n-H_k}{(n-k)\binom{n}{k}}\notag\\
&+s\sum_{k=0}^{n-1}\frac{(-1)^k}{k+1}\binom{s+k}{k}\frac{H_n-H_k}{(n-k)\binom{n}{k}}[\psi(s+k+1)-\psi(s+1)].
\end{align}
\end{thm}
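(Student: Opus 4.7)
The plan is to mimic exactly the pattern used for the preceding theorems in the paper (Theorems 2.2, 2.4, 2.6, 2.7, 2.8): since equation (\ref{e:25}) is the ``base'' identity obtained by integration, Theorem 2.10 should drop out from a single differentiation with respect to the parameter $s$, with no additional work required.

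Concretely, I would begin by differentiating both sides of identity (\ref{e:25}) with respect to $s$. On the left-hand side the only $s$-dependence sits inside $\binom{s+n}{k}$, and formula (\ref{e:12}) gives $\frac{d}{ds}\binom{s+n}{k}=\binom{s+n}{k}\{\psi(s+n+1)-\psi(s+n-k+1)\}$; this immediately produces the left-hand side of (\ref{e:31}). On the right-hand side, the term $(H_n^2+H_n^{(2)})/2$ is independent of $s$ and contributes nothing. The remaining term is the product of $s$ and the finite sum $\sum_{k=0}^{n-1}\frac{(-1)^k}{k+1}\binom{s+k}{k}\frac{H_n-H_k}{(n-k)\binom{n}{k}}$, to which I would apply the Leibniz product rule.

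The product rule produces two pieces: first, the sum itself (from differentiating the factor $s$), which gives exactly the first sum on the right-hand side of (\ref{e:31}); second, $s$ times the termwise derivative of the sum. Inside that sum the only $s$-dependent factor is $\binom{s+k}{k}=g_k(s)$, and formula (\ref{e:13}) gives $\frac{d}{ds}\binom{s+k}{k}=\binom{s+k}{k}[\psi(s+k+1)-\psi(s+1)]$. Substituting yields precisely the second sum on the right-hand side of (\ref{e:31}).

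There is essentially no obstacle: the identity is engineered to be the $s$-derivative of (\ref{e:25}), just as Theorem 2.6 is the $s$-derivative of Theorem 2.5 and Theorem 2.8 is the $s$-derivative of Theorem 2.7. The only point worth double-checking is that term-by-term differentiation of the finite sums is justified (which is trivial since these are finite sums of functions holomorphic on $\mathbb{C}\setminus\mathbb{Z}^-$) and that the digamma expressions are correctly lined up with the right binomial coefficients, namely $\psi(s+n+1)-\psi(s+n-k+1)$ pairing with $\binom{s+n}{k}$ on the left and $\psi(s+k+1)-\psi(s+1)$ pairing with $\binom{s+k}{k}$ on the right. The whole argument is a one-line proof of exactly the same flavor as the proofs of Theorems 2.6 and 2.8.
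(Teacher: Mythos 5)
Your proposal is correct and coincides exactly with the paper's own proof, which consists of the single sentence that the result follows from differentiating both sides of (\ref{e:25}) with respect to $s$ and using (\ref{e:12}) and (\ref{e:13}). Your spelled-out application of the product rule to the $s\cdot(\text{sum})$ term is precisely the computation the paper leaves implicit.
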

\begin{proof}
The proof follows from differentiating both sides of the equation (\ref{e:25}) with respect to $s$, and using (\ref{e:12}) and (\ref{e:13}).
\end{proof}
\begin{thm} For all $n\in\mathbb{N}$ and $s\in\mathbb{C}\backslash\mathbb{Z^-}$ we have 
\begin{align}\label{e:32} 
&\sum\limits_{k=1}^{n}\binom{s+n}{k}\frac{(-1)^{k-1}}{k^2}\bigg\{(\psi(s+n+1)-\psi(s+n-k+1))^2\notag\\
&+\psi'(s+n+1)-\psi'(s+n-k+1)\bigg\}\notag\\
&=2\sum_{k=0}^{n-1}\frac{(-1)^k}{k+1}\binom{s+k}{k}\frac{H_n-H_k}{(n-k)\binom{n}{k}}[\psi(s+k+1)-\psi(s+1)]\notag\\
&+s\sum_{k=0}^{n-1}\frac{(-1)^k}{k+1}\binom{s+k}{k}\frac{H_n-H_k}{(n-k)\binom{n}{k}}\bigg\{(\psi(s+k+1)-\psi(s+1))^2\notag\\
&+\psi'(s+k+1)-\psi'(s+1)\bigg\}.
\end{align}
\end{thm}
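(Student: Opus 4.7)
The plan is to follow the same pattern that produced Theorems 2.4 and 2.8 (and, one step earlier, Theorem 2.10 itself): identity (\ref{e:32}) is obtained by differentiating both sides of (\ref{e:31}) with respect to $s$ and collecting terms with the aid of (\ref{e:12}) and (\ref{e:13}). Since (\ref{e:31}) is already the first $s$-derivative of (\ref{e:25}), this amounts to taking a second derivative of the base identity (\ref{e:25}), which is the natural analogue of how (\ref{e:18}) arose from (\ref{e:16}) or (\ref{e:24}) from (\ref{e:23}).

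On the left-hand side of (\ref{e:31}), each summand is $\frac{(-1)^{k-1}}{k^{2}}\binom{s+n}{k}\bigl[\psi(s+n+1)-\psi(s+n-k+1)\bigr]$, and I would apply the product rule. The factor $\binom{s+n}{k}$ differentiates to $\binom{s+n}{k}[\psi(s+n+1)-\psi(s+n-k+1)]$ by (\ref{e:12}), which when multiplied by the existing $[\psi(s+n+1)-\psi(s+n-k+1)]$ gives the squared polygamma difference. The remaining bracket differentiates to $[\psi'(s+n+1)-\psi'(s+n-k+1)]$, producing exactly the braced expression on the left of (\ref{e:32}).

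On the right-hand side of (\ref{e:31}) the ratio $\frac{H_{n}-H_{k}}{(n-k)\binom{n}{k}}$ is independent of $s$, so only the factors $\binom{s+k}{k}$, the explicit $s$, and $[\psi(s+k+1)-\psi(s+1)]$ contribute. Using (\ref{e:13}), $\frac{d}{ds}\binom{s+k}{k}=\binom{s+k}{k}[\psi(s+k+1)-\psi(s+1)]$; the first sum in (\ref{e:31}) thus yields one copy of the polygamma-difference sum, while the second sum in (\ref{e:31}), carrying the explicit $s$, produces three pieces by the product rule: one copy (from $\tfrac{d}{ds}s=1$) that combines with the previous contribution to give the coefficient $2$, one copy of the squared polygamma difference (from $\tfrac{d}{ds}\binom{s+k}{k}$), and one copy of $\psi'(s+k+1)-\psi'(s+1)$ (from $\tfrac{d}{ds}[\psi(s+k+1)-\psi(s+1)]$). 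The latter two combine into the braced expression on the right of (\ref{e:32}), both multiplied by $s$.

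The verification is essentially mechanical once the differentiation formulas (\ref{e:12}) and (\ref{e:13}) are applied systematically, and the only real bookkeeping concern is confirming that the two $[\psi(s+k+1)-\psi(s+1)]$ contributions on the right genuinely add to produce the factor $2$ in front of the first sum of (\ref{e:32}); by the analysis just sketched, they do, completing the proof.
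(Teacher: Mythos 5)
Your proposal is correct and follows exactly the paper's own route: the paper proves (\ref{e:32}) by differentiating both sides of (\ref{e:31}) with respect to $s$ and invoking (\ref{e:12}) and (\ref{e:13}), which is precisely what you do. Your explicit bookkeeping of the product-rule terms (in particular, the two copies of the $[\psi(s+k+1)-\psi(s+1)]$-weighted sum combining to give the factor $2$) is a correct elaboration of the one-line argument the paper gives.
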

\begin{proof}
The proof follows from differentiating both sides of the equation (\ref{e:31}) with respect to $s$, and using (\ref{e:12}) and (\ref{e:13}).
\end{proof}
\section{Applications}
In this section we present many applications of our main results, which are derived by taking particular values for the parameters $s$ and $x$.\\
\\
\textbf{Identity 1}. For $n\in\mathbb{N}$ it holds that
\begin{equation}\label{e:33} 
\sum\limits_{k=0}^{n}(-1)^k\binom{s+n}{k}=(-1)^n\binom{s+n-1}{n}.
\end{equation}
\begin{proof}
Making use of (\ref{e:10}), the proof immediately follows from setting  $x=-1$ in  (\ref{e:14}).
\end{proof}
Performing the replacement $s\to s-n$ in (\ref{e:33}), and using (\ref{e:10}) we get:\\
\textbf{Identity 2.} For all $n\in\mathbb{N}$ and $s\in\mathbb{N}$ with $s\geq n$ or $s\in\mathbb{C}\backslash\mathbb{Z}$ we have
\begin{equation}\label{e:34} 
\sum_{k=0}^{n}(-1)^k\binom{s}{k}=(-1)^n\binom{s-1}{n}.
\end{equation}
\textbf{Identity 3.} Let $n\in\mathbb{N}$. Then we have
\begin{equation*}
\sum_{k=0}^{n}\frac{1}{2^{2k}}\binom{2k}{k}=\frac{2n+1}{2^{2n}}\binom{2n}{n}.
\end{equation*}
\begin{proof} Setting $s=-1/2$ in (\ref{e:34}), we get
\begin{equation}\label{e:35} 
\sum_{k=0}^{n}(-1)^k\binom{-1/2}{k}=(-1)^n\binom{-3/2}{n}.
\end{equation}
Clearly, by (\ref{e:9}), we have
\begin{equation}\label{e:36}
\binom{-1/2}{k}=\frac{\Gamma(1/2)}{\Gamma\left(\frac{1}{2}-k\right)k!}.
\end{equation}
Using the reflection and duplication formulas given in (\ref{e:7}) and (\ref{e:8}) with $s=\frac{1}{2}-k$, we see that
\begin{equation}\label{e:37} 
\Gamma\left(\frac{1}{2}-k\right)\Gamma\left(\frac{1}{2}+k\right)=\frac{\pi}{\sin\left[\pi\left(\frac{1}{2}-k\right)\right]}=(-1)^k\pi.
\end{equation}
Using (\ref{e:8}) with $s=k$, $\Gamma(1/2)=\sqrt{\pi}$, and (\ref{e:37}), we obtain
\begin{equation*}
\Gamma\left(\frac{1}{2}-k\right)=\frac{(-1)^k\sqrt{\pi}2^{2k}k!}{(2k)!}.
\end{equation*}
Inserting this into (\ref{e:36}) we get
\begin{equation}\label{e:38} 
\binom{-1/2}{k}=\frac{(-1)^k}{2^{2k}}\binom{2k}{k}.
\end{equation}
A similar computation leads to
\begin{equation}\label{e:39}
\binom{-3/2}{n}=(-1)^n\frac{2n+1}{2^{2n}}\binom{2n}{n}.
\end{equation}
Replacing (\ref{e:38}) and (\ref{e:39}) in (\ref{e:35}), the desired result is obtained.
\end{proof}
\textbf{Identity 4.} For $n\in\mathbb{N}$, we have
\begin{equation*}
 \sum_{k=1}^{n}\frac{1}{2^{2k}}\binom{2k}{k}(2H_{2k}-H_k)=\frac{2n+1}{2^{2n}}\binom{2n}{n}\left[2H_{2n}-H_n-\frac{4n}{2n+1}\right].
\end{equation*}
\begin{proof} If we set $s=-1/2$ in (\ref{e:34}), after differentiating with respect to $s$ each side,  we get
\begin{align}\label{e:40} 
\sum_{k=1}^{n}&(-1)^k\binom{-1/2}{k}\left[\psi(1/2)-\psi(1/2-k)\right]\notag \\
&=(-1)^n\binom{-3/2}{n}\left[\psi(-1/2)-\psi(-n-1/2)\right]
\end{align}
Setting $s=\frac{1}{2}-k$ in (\ref{e:6}), we obtain
\begin{equation*}
\psi(1/2-k)-\psi(1/2+k)=-\cot(\pi/2-\pi k)=0.
\end{equation*}
Using (\ref{e:5}) this yields
\begin{equation*}
\psi(1/2-k)=\psi(1/2+k)=2\psi(2k)-\psi(k)-2\log 2,
\end{equation*}
so that,
\begin{equation*}
\psi(1/2)-\psi(1/2-k)=\psi(1/2)+2\log2-2\psi(2k)+\psi(k).
\end{equation*}
Since $\psi(1/2)+\gamma+2\log2=0$ (see \cite[pg. 32]{3}), we conclude by (\ref{e:4})
\begin{equation}\label{e:41} 
\psi(1/2)-\psi(1/2-k)=-2H_{2k}+H_k.
\end{equation}
A similar calculation yields
\begin{equation}\label{e:42} 
\psi(-1/2)-\psi(-1/2-n)=\frac{4n}{2n+1}-2H_{2n}+H_n.
\end{equation}
Hence, we immediately obtain, by employing (\ref{e:41}) and (\ref{e:42}) in (\ref{e:40})
\begin{align*}
\sum_{k=1}^{n}&(-1)^k\binom{-1/2}{k}[-2H_{2k}+H_k]\\
&=(-1)^n\binom{-3/2}{n}\left[\frac{4n}{2n+1}-2H_{2n}+H_n\right].
\end{align*}
Taking into account Equations (\ref{e:38}) and (\ref{e:39}), we complete the proof.
\end{proof}
\textbf{Identity 5.} For all $n\in\mathbb{N}$, the following identity holds.\\
\begin{equation}\label{e:43}
\sum_{k=1}^{n}\frac{(-1)^{k-1}}{k}\binom{n}{k}=H_n.
\end{equation}
\begin{proof}
The proof follows from (\ref{e:3}) by setting  $s=0$ in (\ref{e:20}).
\end{proof}
\begin{rem}
Identity (\ref{e:43}) is well-known and originally due to Euler (see \cite{10}, \cite{5} and \cite{2}).
\end{rem}
\begin{rem}
Identity  (\ref{e:20})  provides a generalization of (\ref{e:43}).
\end{rem}
\textbf{Identity 6.} For $n\in\mathbb{N}$ it holds that
\begin{equation}\label{e:44} 
  \sum_{k=1}^{n}\frac{(-1)^{k-1}}{k}\binom{n}{k}H_{n-k}=H_n^2+\sum_{k=1}^{n}\frac{(-1)^{k}}{k^2\binom{n}{k}}.
\end{equation}
\begin{proof}
The proof follows from (\ref{e:23}) by setting $s=0$, and using (\ref{e:43}) and (\ref{e:3}).
\end{proof}
\begin{rem}
Identity (\ref{e:44}) can be compared with the following identity (see \cite{19}):
\begin{equation*}
\sum_{k=1}^{n}\frac{(-1)^{k-1}}{k}\binom{n}{k}H_{n+k}=H_n^2+\sum_{k=1}^{n}\frac{1}{k^2\binom{n+k}{k}}.
\end{equation*}
\end{rem}
\textbf{Identity 7.} For all $n\in\mathbb{N}$ and $x\in\mathbb{C}$, we have
\begin{equation}\label{e:45} 
\sum_{k=0}^{n}\binom{n}{k}H_{n-k}x^k=(1+x)^n\left[H_n-\sum_{k=1}^{n}\frac{1}{k}\left(\frac{x}{1+x}\right)^k\right].
\end{equation}
\begin{proof} Taking  $s=0$ in (\ref{e:16}), we get
\begin{equation}\label{e:46} 
\sum_{k=0}^{n}\binom{n}{k}[\psi(n+1)-\psi(n-k+1)]x^k=(1+x)^n\sum_{k=1}^{n}\frac{1}{k}\left(\frac{x}{1+x}\right)^k.
\end{equation}
Employing (\ref{e:4}), (\ref{e:46}) implies that
\begin{equation}\label{e:47} 
\sum_{k=0}^{n}\binom{n}{k}[H_n-H_{n-k}]x^k=(1+x)^n\sum_{k=1}^{n}\frac{1}{k}\left(\frac{x}{1+x}\right)^k,
\end{equation}
which is equivalent with (\ref{e:45}), since $\sum_{k=0}^{n}\binom{n}{k}H_nx^k=(1+x)^nH_n$.
\end{proof}
If set $x=1$ in (\ref{e:45}), we get the following known result \cite{6,12,21}:\\
\textbf{Identity 8.} For $n\in\mathbb{N}$ it holds true
$$
\sum_{k=0}^{n}\binom{n}{k}H_k=2^n\left[H_n-\sum_{k=1}^{n}\frac{1}{k2^k}\right].
$$
\textbf{Identity 9.} For $n\in\mathbb{N}$ and $x\in\mathbb{C}$, we have
\begin{align}\label{e:48} 
&\sum_{k=1}^{n}\binom{n}{k}\left[H_k^2+H_k^{(2)}\right]x^k\notag\\
&=(1+x)^n\left[H_n^2+H_n^{(2)}+2\sum_{k=1}^{n}\frac{H_{k-1}-H_n}{k(1+x)^k}\right].
\end{align}
\begin{proof} By setting $s=0$ in (\ref{e:18}), we can readily deduce by the help of (\ref{e:3}) and (\ref{e:4}) that
\begin{align}\label{e:49} 
&\sum_{k=0}^{n}\binom{n}{k}\left[\left(H_n-H_{n-k}\right)^2-H_n^{(2)}+H_{n-k}^{(2)}\right]x^k\notag\\
&=2(1+x)^n\sum_{k=1}^{n}\frac{H_{k-1}}{k}\left(\frac{x}{1+x}\right)^k.
\end{align}
Expanding the quadratic term on the left side of (\ref{e:49})  and using (\ref{e:45}), we get after simplification
\begin{align}\label{e:50} 
&\sum_{k=0}^{n}\binom{n}{k}\left[H_{n-k}^2+H_{n-k}^{(2)}\right]x^k\notag\\
&-\left[H_n^2+H_n^{(2)}\right](1+x)^n+2H_n(1+x)^n\sum_{k=1}^{n}\frac{1}{k}\left(\frac{x}{1+x}\right)^k\notag\\
&=2(1+x)^n\sum_{k=1}^{n}\frac{H_{k-1}}{k}\left(\frac{x}{1+x}\right)^k.
\end{align}
Letting $n-k=k'$ on the first term in the left side of (\ref{e:50}) and then dropping the prime on $k'$, dividing both sides by $x^n$, then replacing
 $x$ by $1/x$ in both sides, and finally simplifying the resulting equation, we get the desired result (\ref{e:50}).
\end{proof}
If we set $x=-1$ in (\ref{e:48}), we get the following known result, see \cite{20}.\\
\textbf{Identity 10.} For $n\in\mathbb{N}$, we have
\begin{equation*}
\sum_{k=1}^{n}(-1)^k\binom{n}{k}\left\{H_k^2+H_k^{(2)}\right\}=-\frac{2}{n^2}.
\end{equation*}
The following identity is known and computer program \textit{Mathematica} recognize it. \\
\textbf{Identity 11.} For $n\in\mathbb{N}$, we have
\begin{equation}\label{e:51} 
\sum_{k=1}^{n}\frac{(-1)^k}{k\binom{n}{k}}=\frac{(-1)^n-1}{n+1}.
\end{equation}
\begin{proof} Setting $s=1$ in (\ref{e:20}), we get
\begin{equation}\label{e:52} 
\sum_{k=1}^{n}\frac{(-1)^{k-1}}{k}\binom{n+1}{k}=H_n+\sum_{k=1}^{n}\frac{(-1)^{k-1}}{k\binom{n}{k}}.
\end{equation}
Since
\begin{equation*}
\sum_{k=1}^{n}\frac{(-1)^{k-1}}{k}\binom{n+1}{k}=\sum_{k=1}^{n+1}\frac{(-1)^{k-1}}{k}\binom{n+1}{k}-\frac{(-1)^n}{n+1},
\end{equation*}
if we use (\ref{e:43}) we get
\begin{equation*}
\sum_{k=1}^{n}\frac{(-1)^{k-1}}{k}\binom{n+1}{k}=H_{n+1}-\frac{(-1)^n}{n+1}.
\end{equation*}
This completes the proof  of (\ref{e:51}) by the help of (\ref{e:52}).
\end{proof}
\textbf{Identity 12.} Let $n\in\mathbb{N}$. Then we have
\begin{align*}
&\sum_{k=1}^{n}\frac{(-1)^{k-1}}{k}\binom{n}{k}\left[H_{n-k}^2+H_{n-k}^{(2)}\right]\notag\\
&=H_n^3+H_nH_n^{(2)}+2\sum_{k=1}^{n}\frac{(-1)^k[H_n-H_{k-1}]}{k^2\binom{n}{k}}.
\end{align*}
\begin{proof} Putting $s=0$ in (\ref{e:24}), we obtain
\begin{equation*}
\sum_{k=1}^{n}\frac{(-1)^{k-1}}{k}\binom{n}{k}\left[\left(H_n-H_{n-k}\right)^2-H_n^{(2)}+H_{n-k}^{(2)}\right]\notag\\=2\sum_{k=1}^{n-1}\frac{(-1)^kH_k}{(k+1)^2\binom{n}{k+1}}.
\end{equation*}
Expanding the quadratic term here, this becomes
\begin{align*}
&\left[H_n^2-H_n^{(2)}\right]\sum_{k=1}^{n}\frac{(-1)^{k-1}}{k}\binom{n}{k}-2H_n\sum_{k=1}^{n}\frac{(-1)^{k-1}}{k}\binom{n}{k}H_{n-k}\notag\\
&+\sum_{k=1}^{n}\frac{(-1)^{k-1}}{k}\binom{n}{k}\left\{H_{n-k}^2+H_{n-k}^{(2)}\right\}=2\sum_{k=1}^{n-1}\frac{(-1)^kH_k}{(k+1)^2\binom{n}{k+1}}.
\end{align*}
Using  (\ref{e:43}) and (\ref{e:44}) here and simplifying the resulting equation we complete  the proof.
\end{proof}
\textbf{Identity 13.} Let $m,n\in\mathbb{N}$. Then we have
\begin{equation}\label{e:53} 
\sum_{k=0}^{n}(-1)^k\binom{mn}{k}H_{mn-k}=\frac{(-1)^{n}}{m}\binom{mn}{n}\left[(m-1)H_{(m-1)n}-\frac{1}{mn}\right].
\end{equation}
\begin{proof}
If we write  Eq. (\ref{e:34}) at  $s=mn$, we get
\begin{equation}\label{e:54} 
\sum_{k=0}^{n}(-1)^k\binom{mn}{k}=(-1)^n\binom{mn-1}{n}.
\end{equation}
If we differentiate with respect to $s$ both sides of (\ref{e:34}) and set $s=mn$ $(m\in\mathbb{N})$, we get, in view of (\ref{e:3})
\begin{align}\label{e:55} 
&\sum_{k=0}^{n}(-1)^k\binom{mn}{k}\{H_{mn}-H_{mn-k}\}\notag\\
&=(-1)^n\binom{mn-1}{n}\{H_{mn-1}-H_{mn-n-1}\}.
\end{align}
But if we use (\ref{e:54}) we can write
\begin{align*}
&\sum_{k=0}^{n}(-1)^k\binom{mn}{k}\{H_{mn}-H_{mn-k}\}\notag\\
&=H_{mn}\sum\limits_{k=0}^{n}(-1)^k\binom{mn}{k}-\sum\limits_{k=0}^{n}(-1)^k\binom{mn}{k}H_{mn-k}\notag\\
&=(-1)^nH_{mn}\binom{mn-1}{n}-\sum\limits_{k=0}^{n}(-1)^k\binom{mn}{k}H_{mn-k}.
\end{align*}
Using this identity in (\ref{e:55}) and simplifying the result we obtain
\begin{align*}
&\sum_{k=0}^{n}(-1)^k\binom{mn}{k}H_{mn-k}=(-1)^n\binom{mn-1}{n}\left[H_{mn-n}-\frac{1}{nm(m-1)}\right].
\end{align*}
Since $\binom{mn-1}{n}=\frac{m-1}{m}\binom{mn}{n}$, this completes the proof.
\end{proof}
For the special cases $m=2,3$, we get from (\ref{e:53}):\\
\textbf{Identity 14.} For $n\in\mathbb{N}$
\begin{align*}
 \sum_{k=0}^{n}(-1)^k\binom{2n}{k}H_{2n-k}&=\frac{(-1)^n}{2}\binom{2n}{n}\left(H_n-\frac{1}{2n}\right) \,\textrm{and}\\
 \sum_{k=0}^{n}(-1)^k\binom{3n}{k}H_{3n-k}&=\frac{(-1)^n}{3}\binom{3n}{n}\left(2H_n-\frac{1}{3n}\right).
\end{align*}
\textbf{Identity 15.} For $n\in\mathbb{N}$ we have
\begin{equation*}
\sum_{k=1}^{n}\frac{(-1)^{k}H_k}{k\binom{n}{k}}=\frac{(-1)^{n}H_{n+1}}{n+1}+\sum_{k=1}^{n+1}\frac{(-1)^{k}}{k^2\binom{n+1}{k}}.
\end{equation*}
\begin{proof} Setting $s=1$ in (\ref{e:23}), we obtain, in view of (\ref{e:3}) and $\psi(2)=1+\psi(1)$,
\begin{align}\label{e:56} 
&\sum_{k=1}^{n}\frac{(-1)^{k-1}}{k}\binom{n+1}{k}\{H_{n+1}-H_{n-k+1}\}\notag\\
&=\sum_{k=0}^{n-1}\frac{(-1)^k}{(k+1)\binom{n}{k+1}}+\sum_{k=0}^{n-1}\frac{(-1)^k}{(k+1)\binom{n}{k+1}}\{H_{k+1}-1\}\notag\\
&=\sum_{k=1}^{n}\frac{(-1)^{k-1}H_k}{k\binom{n}{k}}.
\end{align}
On the other hand, we have
\begin{align}\label{e:57} 
&\sum_{k=1}^{n}\frac{(-1)^{k-1}}{k}\binom{n+1}{k}\{H_{n+1}-H_{n-k+1}\}\notag\\
&=H_{n+1}\left[\sum_{k=1}^{n+1}\frac{(-1)^{k-1}}{k}\binom{n+1}{k}-\frac{(-1)^n}{n+1}\right]-\sum_{k=1}^{n+1}\frac{(-1)^{k-1}}{k}\binom{n+1}{k}H_{n+1-k}.
\end{align}
Using (\ref{e:43}) and (\ref{e:44}) we conclude from (\ref{e:57}) that
\begin{equation*}
\sum_{k=1}^{n}\frac{(-1)^{k-1}}{k}\binom{n+1}{k}\{H_{n+1}-H_{n-k+1}\}=\frac{(-1)^{n+1}H_{n+1}}{n+1}+\sum_{k=1}^{n+1}\frac{(-1)^{k-1}}{k^2\binom{n+1}{k}}.
\end{equation*}
Now the  proof follows from (\ref{e:56}).
\end{proof}
\textbf{Identity 16.} For $n\in\mathbb{N}$, we have
\begin{equation*}
\sum_{k=0}^{n}\frac{(-1)^{k-1}4^k\binom{n}{k}}{\binom{2k}{k}}=\frac{1}{2n-1}.
\end{equation*}
\begin{proof}
Setting $s=-\frac{1}{2}$ in (\ref{e:33}), we get
\begin{equation}\label{e:58} 
\sum_{k=0}^{n}(-1)^k\binom{n-1/2}{k}=(-1)^n\binom{n-3/2}{n}.
\end{equation}
\end{proof}
We have by (\ref{e:8}) and (\ref{e:9})
\begin{equation}\label{e:59} 
\binom{n-1/2}{k}=\frac{\Gamma(n+1/2)}{k!\Gamma(n+1/2)}=\frac{\Gamma(n-k+1/2)}{k!}\frac{(n-k)!2^{2n-2k}}{\Gamma(1/2)(2n-2k)!}
\end{equation}
and
\begin{equation}\label{e:60} 
\binom{n-3/2}{n}=\frac{\Gamma(n+1/2)}{(n-1/2)n!\Gamma(-1/2)}.
\end{equation}
Utilizing (\ref{e:59}) and (\ref{e:60}) in (\ref{e:58}), we may write
\begin{equation*}
\sum_{k=0}^{n}\frac{(-1)^{n-k}2^{2n-2k}n!(n-k)!}{k!(2n-2k)!}=\frac{2\Gamma(1/2)}{(2n-1)\Gamma(-1/2)}.
\end{equation*}
Noting that $\Gamma(-1/2)=-2\Gamma(1/2)$ and letting $k\to n-k$ in the left side, we complete the proof.
\begin{rem}
Identity 16 is known and can be found in \cite[Theorem 4.5]{22} .
\end{rem}
If we set $s=0$ in (\ref{e:25}) we get the following known result, see \cite{2}:\\
\textbf{Identity 17.} For $n\in\mathbb{N}$, we have
\begin{equation}\label{e:61} 
\sum_{k=1}^{n}\binom{n}{k}\frac{(-1)^{k-1}}{k^2}=\frac{H_n^2+H_n^{(2)}}{2}.
\end{equation}
\textbf{Identity 18.} For $n\in\mathbb{N}$, we have
\begin{equation*}
\sum_{k=1}^{n}\frac{(-1)^kH_{n-k}}{k\binom{n}{k}}=\frac{1-(-1)^n}{(n+1)^2}-\frac{H_n}{n+1}.
\end{equation*}
\begin{proof} If we take $s=1$ in (\ref{e:25}), we get
\begin{equation}\label{e:62} 
\sum_{k=1}^{n}\binom{n+1}{k}\frac{(-1)^{k-1}}{k^2}=\frac{H_n^2+H_n^{(2)}}{2}+\sum_{k=0}^{n-1}\frac{(-1)^k(H_n-H_k)}{(n-k)\binom{n}{k}}.
\end{equation}
Using (\ref{e:61}) we get
\begin{align}\label{e:63}
\sum_{k=1}^{n}\binom{n+1}{k}\frac{(-1)^{k-1}}{k^2}&=\sum_{k=1}^{n+1}\binom{n+1}{k}\frac{(-1)^{k-1}}{k^2}-\frac{(-1)^n}{(n+1)^2}\notag\\
&=\frac{H_{n+1}^2+H_{n+1}^{(2)}}{2}-\frac{(-1)^n}{(n+1)^2}.
\end{align}
On the other hand we have
\begin{equation*}
\sum_{k=0}^{n-1}\frac{(-1)^k(H_n-H_k)}{(n-k)\binom{n}{k}}=H_n\sum_{k=0}^{n-1}\frac{(-1)^k}{(n-k)\binom{n}{k}}-\sum_{k=0}^{n-1}\frac{(-1)^kH_k}{(n-k)\binom{n}{k}}.
\end{equation*}
If we substitute $n-k=k'$ in  the sums on the right hand side and then dropping the prime on $k$, we get
\begin{equation*}
\sum_{k=0}^{n-1}\frac{(-1)^k(H_n-H_k)}{(n-k)\binom{n}{k}}=H_n\sum_{k=1}^{n}\frac{(-1)^{n-k}}{k\binom{n}{k}}-\sum_{k=1}^{n}\frac{(-1)^{n-k}H_{n-k}}{k\binom{n}{k}}.
\end{equation*}
Using (\ref{e:51}) gives us
\begin{equation}\label{e:64}
\sum_{k=0}^{n-1}\frac{(-1)^k(H_n-H_k)}{(n-k)\binom{n}{k}}=\frac{H_n(1-(-1)^n)}{n+1}-(-1)^n\sum_{k=1}^{n}\frac{(-1)^kH_{n-k}}{k\binom{n}{k}}.
\end{equation}
Using (\ref{e:63}) and (\ref{e:64}) in (\ref{e:62}), we complete the proof.
\end{proof}
\textbf{Identity 19.} For all $n\in \mathbb{N}$, we have
\begin{equation}\label{e:65} 
\sum_{k=1}^{n}\frac{(-1)^{k-1}}{k^2}\binom{n}{k}H_{n-k}=\frac{H_n\left(H_n^2+H_n^{(2)}\right)}{2}-\sum_{k=0}^{n-1}\frac{(-1)^{k}(H_n-H_k)}{(k+1)(n-k)\binom{n}{k}}.
\end{equation}
\begin{proof} Setting $s=0$ in (\ref{e:31}), we get
\begin{equation}\label{e:66}
\sum_{k=1}^{n}\frac{(-1)^{k-1}}{k^2}\binom{n}{k}\{H_n-H_{n-k}\}=\sum_{k=0}^{n-1}\frac{(-1)^k(H_n-H_k)}{(k+1)(n-k)\binom{n}{k}}.
\end{equation}
By (\ref{e:61}) we have
\begin{align}\label{e:67}
\sum_{k=1}^{n}\frac{(-1)^{k-1}}{k^2}\binom{n}{k}\{H_n-H_{n-k}\}&=\frac{H_n\left(H_n)^2+H_n^{(2)}\right)}{2}\notag\\
 &-\sum_{k=1}^{n}\frac{(-1)^{k-1}}{k^2}\binom{n}{k}H_{n-k}.
 \end{align}
Combining Equations (\ref{e:66}) and (\ref{e:67}), we complete the proof of (\ref{e:65}).
\end{proof}
\textbf{Identity 20.} For all $n\in\mathbb{N}$, it holds that
\begin{align*}
&\sum_{k=1}^{n}\frac{(-1)^{k-1}}{k^2}\binom{n}{k}\left\{H_{n-k}^2+H_{n-k}^{(2)}\right\}=\frac{\left(H_n^2+H_n^{(2)}\right)^2}{2}\\
&-2\sum_{k=0}^{n-1}\frac{(-1)^k(H_n-H_k)^2}{(k+1)(n-k)\binom{n}{k}}.
\end{align*}
\begin{proof}
Setting $s=0$ in (\ref{e:32}), we get
\begin{align}\label{e:68}
&\sum_{k=1}^{n}\frac{(-1)^{k-1}}{k^2}\binom{n}{k}\left\{(H_n-H_{n-k})^2+H_{n-k}^{(2)}-H_n^{(2)}\right\}\notag\\
&=2\sum_{k=0}^{n-1}\frac{(-1)^k(H_n-H_k)}{(k+1)(n-k)\binom{n}{k}}.
\end{align}
Clearly, we have
\begin{align*}
&\sum_{k=1}^{n}\frac{(-1)^{k-1}}{k^2}\binom{n}{k}\left\{(H_n-H_{n-k})^2+H_{n-k}^{(2)}-H_n^{(2)}\right\}\\
&=\left[H_n^2-H_n^{(2)}\right]\sum_{k=1}^{n}\frac{(-1)^{k-1}}{k^2}\binom{n}{k}-2H_n\sum_{k=1}^{n}\frac{(-1)^{k-1}}{k^2}\binom{n}{k}H_{n-k}\\
&+\sum_{k=1}^{n}\frac{(-1)^{k-1}}{k^2}\binom{n}{k}\left\{H_{n-k}^2+H_{n-k}^{(2)}\right\}.
\end{align*}
By the help of (\ref{e:61}) and (\ref{e:65}) we get from this identity
\begin{align*}
&\sum_{k=1}^{n}\frac{(-1)^{k-1}}{k^2}\binom{n}{k}\left\{(H_n-H_{n-k})^2+H_{n-k}^{(2)}-H_n^{(2)}\right\}\\
&=-\frac{\left(H_n^2+H_n^{(2)}\right)^2}{2}-2H_n\sum_{k=0}^{n-1}\frac{(-1)^{k-1}(H_n-H_k)}{(k+1)(n-k)\binom{n}{k}}\\
&+\sum_{k=1}^{n}\frac{(-1)^{k-1}}{k^2}\binom{n}{k}\left\{H_{n-k}^2+H_{n-k}^{(2)}\right\}.
\end{align*}
Now the proof follows from (\ref{e:68}).
\end{proof}

\end{document}